\documentclass[12pt]{amsart}
\usepackage[utf8]{inputenc}
\usepackage{amssymb}
\usepackage{hyperref}
\usepackage[final]{showkeys} 

\input xy
\xyoption{all}

\theoremstyle{definition}
\newtheorem{mydef}{Definition}[section]

\newtheorem{thm}[mydef]{Theorem}


\newcommand{\footnotei}[1]{}



\newcommand{\cf}[1]{\text{cf} (#1)}
\newcommand{\seq}[1]{\langle #1 \rangle}





\def\lea{\le}

\def\gea{\ge}

\newcommand{\K}{K}





\newbox\noforkbox \newdimen\forklinewidth
\forklinewidth=0.3pt \setbox0\hbox{$\textstyle\smile$}
\setbox1\hbox to \wd0{\hfil\vrule width \forklinewidth depth-2pt
 height 10pt \hfil}
\wd1=0 cm \setbox\noforkbox\hbox{\lower 2pt\box1\lower
2pt\box0\relax}
\def\unionstick{\mathop{\copy\noforkbox}\limits}


\def\1nf{\unionstick^{(1)}}

\def\2nf{\unionstick^{(2)}}
\def\3nf{\unionstick^{(3)}}


\newcommand{\gtp}{\text{gtp}}

\newcommand{\gS}{\text{gS}}

\newcommand{\hanf}[1]{h (#1)}



\newcommand{\cl}{\text{cl}}

\newcommand{\F}{\mathcal{F}}
\newcommand{\LS}{\text{LS}}









\title[Guide to categoricity in universal classes]{The lazy model theoretician's guide to {S}helah's eventual categoricity conjecture in universal classes}
\date{\today\\
AMS 2010 Subject Classification: Primary 03C48. Secondary: 03C45, 03C52, 03C55, 03C75, 03E55.}
\keywords{Abstract elementary classes; Categoricity; Amalgamation; Forking; Independence; Classification theory; Superstability; Universal classes; Intersection property; Prime models}

\parindent 0pt
\parskip 5pt

\setcounter{tocdepth}{1}

\author{Sebastien Vasey}
\email{sebv@cmu.edu}
\urladdr{http://math.cmu.edu/\textasciitilde svasey/}
\address{Department of Mathematical Sciences, Carnegie Mellon University, Pittsburgh, Pennsylvania, USA}
\thanks{The author is supported by the Swiss National Science Foundation.}

\begin{document}

\begin{abstract}
  We give a short overview of the proof of Shelah's eventual categoricity conjecture in universal classes with amalgamation \cite{ap-universal-v9}.
\end{abstract}

\maketitle

\section{Introduction}

We sketch a proof of:

\begin{thm}\label{main-thm}
  Let $\K$ be a universal class with amalgamation. If $\K$ is categorical in\footnote{Here and below, we write $\hanf{\theta} := \beth_{(2^{\theta})^+}$. We see universal classes as AECs so that for $K$ a universal class, $\LS (\K) = |L (\K)| + \aleph_0$. For $\K$ a fixed AEC, we write $H_1 := \hanf{\LS (\K)}$ and $H_2 := \hanf{H_1}$.} \emph{some} $\lambda > H_2$, then $\K$ is categorical in \emph{all} $\lambda' \ge H_2$.
\end{thm}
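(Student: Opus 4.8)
The plan is to exploit the two structural features that make universal classes tractable --- the \emph{intersection property} (every subset of a model generates a unique least substructure) and \emph{full tameness and shortness} of Galois types --- in order to manufacture a well-behaved independence relation and then run a two-sided categoricity transfer. Since categoricity above the Hanf number supplies amalgamation, joint embedding, and arbitrarily large models, I would work throughout inside a monster model, identifying the Galois type $\gtp$ of a tuple with its orbit under automorphisms.

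First I would extract the local structure theory forced by categoricity. Categoricity in $\lambda > H_2$ yields Galois-stability in every $\mu$ with $\LS(\K) \le \mu < \lambda$, and, crucially, \emph{superstability}: the usual no-long-splitting-chain and uniqueness-of-limit-model arguments go through because categoricity suppresses order-type phenomena below $\lambda$. Superstability is the engine that produces a \emph{good $\mu$-frame} $\s$ for a suitable $\mu < \lambda$ --- a local, type-full independence relation on $\mu$-sized models satisfying existence, extension, uniqueness, symmetry, transitivity, and local character.

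Next I would globalize the frame, and this is where the two special features do the real work. Tameness and shortness let me lift the good $\mu$-frame to a good frame on the whole tail $\{\nu : \nu \ge \mu\}$, since independence of a type can be detected on small restrictions and the frame axioms therefore transfer upward; meanwhile the intersection property delivers \emph{canonicity} of the independence relation and \emph{existence of prime models} over independent sets. Together these upgrade $\s$ to a \emph{fully good} independence relation on the class, from which a genuine dimension theory emerges: a model is prime over a maximal independent set, and its isomorphism type is governed by the dimensions attached to that set.

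Finally I would run the transfer. The decisive reduction is that categoricity in a single $\lambda > H_2$ forces \emph{unidimensionality} --- there is essentially one dimension, for otherwise one could vary two orthogonal dimensions independently and build non-isomorphic models of size $\lambda$. Unidimensionality together with uniqueness of saturated models (a consequence of the frame) means that for every $\lambda' \ge H_2$ the isomorphism type of a model of size $\lambda'$ is pinned down by a single maximal cardinal invariant; hence $\K$ is categorical in all such $\lambda'$, both below and above $\lambda$. I expect the main obstacle to be the construction and verification of the fully good independence relation --- especially symmetry, uniqueness, and the existence of prime models --- together with the unidimensionality argument; it is exactly here that the intersection property is indispensable, since without canonicity and primeness the clean dimension theory (and with it the transfer) breaks down.
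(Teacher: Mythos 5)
Your outline is essentially the route of the \emph{original} argument in \cite{ap-universal-v9} (global independence relation, canonicity, orthogonality calculus, dimension theory), whereas the paper under review deliberately avoids building a global independence relation. The paper's proof is organized around a single contradiction: assuming non-categoricity in $H_2^+$, failure of weak unidimensionality yields a type $p$ over a model of size $H_2$ such that the class $\K_{\neg^\ast p}$ (models over which $p$ has a unique extension) carries a good $H_2$-frame; the decisive observation is that $\K_{\neg^\ast p}$ is \emph{again a universal class}, hence $(<\aleph_0)$-tame with amalgamation, so Boney's upward frame transfer \cite{ext-frame-jml} gives it arbitrarily large models, and a model of size $\lambda$ omitting $p$ contradicts saturation of the unique model of size $\lambda$. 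Categoricity in $H_2$ (from \cite{sh394}) and in $H_2^+$ then feed into the Grossberg--VanDieren transfer \cite{tamenesstwo}. Your route buys a richer structure theory, but the paper explicitly flags it as the harder path.

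As written, your sketch also has two genuine gaps at exactly the decisive points. First, ``otherwise one could vary two orthogonal dimensions independently and build non-isomorphic models of size $\lambda$'' is not justified: the orthogonal pair lives at a cardinal far below $\lambda$ (around $H_2$), and producing non-isomorphic models \emph{at $\lambda$} requires transferring the failure of unidimensionality all the way up --- either by verifying your fully good independence relation, with primes over independent systems, at every cardinal up to $\lambda$, or by the paper's trick of recognizing $\K_{\neg^\ast p}$ as a universal class and pushing its frame upward to manufacture a large model omitting a type over a small domain. Second, the conclusion that ``the isomorphism type of a model of size $\lambda'$ is pinned down by a single maximal cardinal invariant'' presupposes that every model of size $\lambda'$ realizes the maximal dimension, i.e.\ is saturated; that is precisely the content of the upward categoricity transfer and needs its own argument (in the paper it is outsourced to \cite{tamenesstwo} after securing categoricity in the two successive cardinals $H_2$ and $H_2^+$). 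Without filling these two steps the proposal does not close.
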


The reader should see the introduction of \cite{ap-universal-v9} for motivation and history. Note that (as stated there) the amalgamation hypothesis can be removed assuming categoricity in cardinals of arbitrarily high cofinality. However this relies on hard arguments of Shelah \cite[Chapter IV]{shelahaecbook}, so we do not discuss it. There are plans for a sequel where the amalgamation hypothesis will be removed under categoricity in a single cardinal of arbitrary cofinality (earlier versions actually claimed it but the argument contained a mistake).

Note that this is not a self-contained argument, we simply attempt to outline the proof and quote extensively from elsewhere. For another exposition, see the upcoming \cite{bv-survey}. 

We attempt to use as few prerequisites as possible and make what we use explicit. We do not discuss generalizations to tame AECs with primes \cite{categ-primes-v3}, although we end up using part of the proof there. 

We assume familiarity with a basic text on AECs such as \cite{baldwinbook09} or the upcoming \cite{grossbergbook}. We also assume the reader is familiar with the definition of a good $\F$-frame (see \cite[Chapter II]{shelahaecbook} for the original definition of a good $\lambda$-frame and \cite[Definition 2.21]{ss-tame-toappear-v3} for good $\F$-frames), and the definition of superstability (implicit in \cite{shvi635}, but we use the definition in \cite[Definition 10.1]{indep-aec-v5}). All the good frames we will use are \emph{type-full}, i.e.\ their basic types are the nonalgebraic types, and we will omit the ``type-full''.

This note was written while working on a Ph.D.\ thesis under the direction of Rami Grossberg at Carnegie Mellon University and I would like to thank Professor Grossberg for his guidance and assistance in my research in general and in this work specifically. I thank John Baldwin for early feedback on this note.

\section{The proof}

The argument depends on \cite{sh394}, on the construction of a good frame and related results in \cite{ss-tame-toappear-v3}, on Boney's theorem on extending good frames using tameness \cite{ext-frame-jml} (the subsequent paper \cite{tame-frames-revisited-v4} is not needed here), and on the Grossberg-VanDieren categoricity transfer \cite{tamenesstwo}. The argument also depends on some results about unidimensionality in III.2 of \cite{shelahaecbook} (these results have short full proofs, and have appeared in other forms elsewhere, most notably in \cite{tamenesstwo,tamenessthree}).

 There is a dependency on the Shelah-Villaveces theorem (\cite[Theorem 2.2.1]{shvi635}), which can be removed in case one is willing to assume that $\cf{\lambda} > \LS (\K)$. This is reasonable if one is willing to assume that $K$ is categorical in unboundedly many cardinals: then by amalgamation, the categoricity spectrum will contain a club, hence cardinals of arbitrarily high cofinality.

\begin{proof}[Proof of Theorem \ref{main-thm}] 
  We proceed in several steps.
  \begin{enumerate}
    \item Without loss of generality, $\K$ has joint embedding and no maximal models.

      [Why? Let us define a relation $\sim$ on $\K$ by $M \sim N$ if and only if $M$ and $N$ embed into a common extension. Using amalgamation, one can see that $\sim$ is an equivalence relation. Now the equivalence classes $\seq{\K_i : i \in I}$ of $\sim$ form disjoint AECs with amalgamation and joint embedding, and by the categoricity assumption (recalling that the Hanf number for existence is bounded by $H_1$) there is a unique $i \in I$ such that $\K_i$ has arbitrarily large models. Moreover $(\K_i)_{\ge H_1} = \K_{\ge H_1}$ so it is enough to work inside $\K_i$.]
    \item $\K$ is $\LS (\K)$-superstable. 

      [Why? By \cite[Theorem 2.2.1]{shvi635}, or really the variation using amalgamation stated explicitly in \cite[Theorem 6.3]{gv-superstability-v2}. Alternatively, if one is willing to assume that $\cf{\lambda} > \LS (\K)$, one can directly apply \cite[Lemma 6.3]{sh394}.]
    \item $\K$ is $(<\aleph_0)$-tame.

      [Why? See \cite[Section 3]{ap-universal-v9}\footnote{The main idea there is due to Will Boney, see \cite{tameness-groups}.} (this does not use the categoricity hypothesis).]
    \item $\K$ is stable in $\lambda$.

      [Why? By \cite[Theorem 5.6]{ss-tame-toappear-v3}, $\LS (\K)$-superstability and $\LS (\K)$-tameness imply stability everywhere above $\LS (\K)$.]
    \item\label{sat-step} The model of size $\lambda$ is saturated.

      [Why? Use stability to build a $\mu^+$-saturated model of size $\lambda$ for each $\mu < \lambda$. Now apply categoricity.]
    \item $\K$ is categorical in $H_2$. 

      [Why? By the proof of \cite[II.1.6]{sh394}, or see \cite[14.8]{baldwinbook09}.]
      
    \item $\K$ has a good $H_2$-frame.

      [Why? By \cite[Theorem 7.3]{ss-tame-toappear-v3} which tells us how to construct a good frame at a categoricity cardinal assuming tameness and superstability below it.]
    \item For $M \in \K_{H_2}$, $p \in \gS (M)$, let $\K_{\neg^\ast p}$ be defined as in \cite[Definition 5.7]{ap-universal-v9}: roughly, it is the class of $N$ so that $p$ has a unique extension to $\gS (N)$ (so in particular $p$ is omitted in $N$), but we add constant symbols for $M$ to the language to make it closed under isomorphisms. Then $\K_{\neg^\ast p}$ is a universal class.

      [Why? That it is closed under substructure is clear. That it is closed under unions of chains is because universal classes are $(<\aleph_0)$-tame, so if a type has two distinct extensions over the union of a chain, it must have two distinct extension over an element of the chain. Here is an alternate, more general, argument: $\K_{H_2}$ is $\aleph_0$-local (by the existence of the good frame), so using tameness it is not hard to see that $\K_{\ge H_2}$ is $\aleph_0$-local. Now proceed as before.]
    \item If $K$ is not categorical in $H_2^+$, then there exists $M \in \K_{H_2}$ and $p \in \gS (M)$ so that $\K_{\neg^\ast p}$ has a good $H_2$-frame.

      [Why? See \cite[Theorem 2.15]{categ-primes-v3}\footnote{The original argument in \cite{ap-universal-v9} is harder, as it requires building a global independence relation.}: it shows that if $K_{H_2}$ is weakly unidimensional (a property that Shelah introduces in III.2 of \cite{shelahaecbook} and shows is equivalent to categoricity in $H_2^+$), then the good $H_2$-frame that $K$ has, restricted to $\K_{\neg^\ast p}$ (for a suitable $p$) is a good $H_2$-frame. The definition of weak unidimensionality is essentially the negation of the fact that there exists two types $p \perp q$ (for a notion of orthogonality defined using prime models).]
    \item If $K$ is not categorical in $H_2^+$, $K_{\neg^\ast p}$ above has arbitrarily large models.

      [Why? By Theorem \ref{step-3} below (recalling that $\K_{\neg^\ast p}$ is a universal class), $\K_{\neg^\ast p}$ has a good $(\ge H_2)$-frame. Part of the definition of such a frame requires existence of a model in every cardinal $\mu \ge H_2$.
    \item If $K$ is not categorical in $H_2^+$, the model of size $\lambda$ is not saturated. This contradicts (\ref{sat-step}) above, therefore $K$ is categorical in $H_2^+$.

      [Why? Take $M \in \K_{\neg^\ast p}$ of size $\lambda$ (exists by the previous step). Then $M$ omits $p$ and the domain of $p$ has size $H_2 < \lambda$.]
    \item $K$ is categorical in all $\lambda' \ge H_2$.

      [Why? We know that $K$ is categorical in $H_2$ and $H_2^+$, so apply the upward transfer of Grossberg and VanDieren \cite[Theorem 0.1]{tamenesstwo}.
  \end{enumerate}
\end{proof}

To complete the proof, we need the following:

\begin{thm}\label{step-3}
  Let $K$ be a universal class. Let $\lambda \ge \LS (\K)$. If $\K$ has a good $\lambda$-frame, then $\K$ has a good $(\ge \lambda)$-frame.
\end{thm}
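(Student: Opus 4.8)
The plan is to combine the strong tameness enjoyed by universal classes with Boney's frame-extension machinery. Since $\K$ is a universal class, it is $(<\aleph_0)$-tame (this is exactly Step 3 of the proof of Theorem \ref{main-thm}, established in \cite[Section 3]{ap-universal-v9} without any categoricity hypothesis); in particular $\K$ is $\lambda$-tame. A good $\lambda$-frame comes equipped with amalgamation, joint embedding, no maximal models, and stability, all in $\K_\lambda$, together with an $\aleph_0$-local nonforking relation $\s$ on $\K_\lambda$. The goal is to promote all of these data to $\K_{\geq \lambda}$ and verify the axioms of a good $(\geq\lambda)$-frame.

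First I would record that the structural hypotheses propagate upward. Amalgamation, joint embedding, no maximal models, and hence existence of a model in every $\mu \geq \lambda$, follow from the corresponding properties in $\K_\lambda$ together with $\lambda$-tameness, by Boney's analysis in \cite{ext-frame-jml}. For stability, note that a good $\lambda$-frame witnesses $\lambda$-superstability (its nonforking relation has the required local character), so by \cite[Theorem 5.6]{ss-tame-toappear-v3} — superstability plus tameness yields stability everywhere above $\LS(\K)$ — the class $\K$ is stable in every $\mu \geq \lambda$. This supplies the stability clause of a good $(\geq\lambda)$-frame.

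Next I would extend the nonforking relation. For $M \lee N$ with $\|M\|, \|N\| \geq \lambda$ and $p \in \gS(N)$, declare that $p$ does not fork over $M$ if and only if there is $M_0 \lee M$ of size $\lambda$ such that $p \rest N_0$ does not $\s$-fork over $M_0$ for every $N_0 \lee N$ of size $\lambda$ containing $M_0$. With this definition, invariance, monotonicity, and normality are routine bookkeeping, while the existence and extension properties and — crucially — uniqueness of nonforking extensions are read off from $\lambda$-tameness (two nonforking extensions of the same type that agree on every small submodel must coincide). Local character and continuity are inherited from the $\aleph_0$-local character of $\s$ together with the fact that types over unions of chains are determined by their small restrictions.

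The delicate point, and the one I expect to be the main obstacle, is \emph{symmetry} of the extended relation: symmetry in $\K_\lambda$ does not formally transfer to larger models, and it is precisely here that the general tame theory requires the extra work of \cite{tame-frames-revisited-v4}. For universal classes, however, I would avoid that route and instead exploit the additional structure available — the class is $(<\aleph_0)$-tame rather than merely $\lambda$-tame, and it has the intersection property — which is what lets Boney's argument in \cite{ext-frame-jml} go through and yields a symmetric, hence good, extended relation. Assembling the propagated structural properties, the stability transfer, and the extended symmetric nonforking relation gives the good $(\geq\lambda)$-frame, completing the proof.
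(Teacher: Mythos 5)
There is a genuine gap, and it sits exactly where the real content of this theorem lies: amalgamation above $\lambda$. You write that amalgamation, joint embedding, and no maximal models in $\K_{\geq\lambda}$ ``follow from the corresponding properties in $\K_\lambda$ together with $\lambda$-tameness, by Boney's analysis in \cite{ext-frame-jml}.'' That is not what Boney proves: his upward frame transfer takes amalgamation in $\K_{\geq\lambda}$ as a \emph{hypothesis} (together with $\lambda$-tameness for types of length two and the good $\lambda$-frame), and whether amalgamation transfers upward from a single cardinal under tameness is not a routine consequence of anything cited here. This matters because the theorem is applied in the main proof to $\K_{\neg^\ast p}$, a universal class for which amalgamation is precisely what is \emph{not} known in advance (note the footnote in the paper's proof: Galois types there are defined via the transitive closure of atomic equivalence because amalgamation is not assumed). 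The paper's proof spends two of its four steps on this point: first it establishes \emph{weak} amalgamation using the isomorphism characterization of Galois types in classes admitting intersections (take $N_1' := \cl^{N_1}(a_1 M)$ and chase the definition of equality of Galois types), and then it invokes \cite[Theorem 4.15]{ap-universal-v9}, a result specific to this setting, to upgrade weak amalgamation plus the good $\lambda$-frame to full amalgamation. You gesture at the intersection property in your last paragraph, but only in connection with symmetry, not where it is actually needed.

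The remainder of your proposal is essentially a hand re-derivation of Boney's theorem (the pointwise definition of nonforking over large models, uniqueness from tameness, local character, etc.), which is fine but redundant once you cite \cite{ext-frame-jml}; and your worry about symmetry is already resolved there by $\lambda$-tameness for types of length two (the paper explicitly notes that \cite{tame-frames-revisited-v4} is not needed). The stability-transfer discussion via \cite[Theorem 5.6]{ss-tame-toappear-v3} is harmless but also unnecessary, since Boney's theorem delivers the full good $(\geq\lambda)$-frame once amalgamation and tameness are in hand. To repair the proof, replace the claim that amalgamation propagates by tameness with the two missing steps: weak amalgamation from the closure operator of the universal class, then full amalgamation from \cite[Theorem 4.15]{ap-universal-v9}.
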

\begin{proof} \
  \begin{enumerate}
    \item $K$ is $\lambda$-tame for types of length two.

      [Why? See \cite[Section 3]{ap-universal-v9}.]
    \item $K$ has weak amalgamation: if\footnote{Since we do not assume amalgamation, Galois types are defined using the transitive closure of atomic equivalence, see e.g.\ \cite[Definition II.1.9]{shelahaecbook}.} $\gtp (a_1 / M; N_1) = \gtp (a_2 / M; N_2)$, there exists $N_1' \lea N_1$ containing $a_1$ and $M$ and $N \gea N_1'$, $f: N_2 \xrightarrow[M]{} N$ so that $f (a_2) = a_1$.

      [Why? By the isomorphism characterization of Galois types in AECs which admit intersections, see \cite[Lemma 2.6]{non-locality} or \cite[Proposition 2.17]{ap-universal-v9}. More explicitly, set $N_1' := \cl^{N_1} (a_1 M)$, where $\cl^{N_1}$ denotes closure under the functions of $N_1$. Then chase the definition of equality of Galois types.]
      
    \item $K$ has amalgamation. 

      [Why? By \cite[Theorem 4.15]{ap-universal-v9}.]

    \item $K$ has a good $(\ge \lambda)$-frame.

      [Why? By Boney's upward frame transfer \cite{ext-frame-jml} which tells us that amalgamation, $\lambda$-tameness for types of length two, and a good $\lambda$-frame imply that the frame can be extended to a good $(\ge \lambda)$-frame.]
  \end{enumerate}
\end{proof}

\bibliographystyle{amsalpha}
\bibliography{uc-categ-overview}

\end{document}